\newcommand{\comments}[1]{}
\renewcommand{\leq}{\leqslant}
\renewcommand{\geq}{\geqslant}
\newcommand{\w}{\omega}
\newcommand{\ka}{\kappa}
\newcommand{\R}{\mathbb{R}}
\newcommand{\N}{\mathbb{N}}
\newcommand{\B}{\mathcal{B}}
\newcommand{\Lind}{Lindel\"of}
\providecommand{\hdf}[1]{$#1$-Hausdorff}
\providecommand{\n}[1]{\frac{1}{#1}}
\newtheorem{thm} {Theorem}[section]
\theoremstyle{definition}
\newtheorem{defn}[thm]{Definition}
\newtheorem{xmpl}[thm]{Example}
\newtheorem*{oq}{Open Question}
\newtheorem*{ackn}{Acknowledgements}
\theoremstyle{remark}
\newtheorem{notation}[thm]{Notation}
\title{A Note on the Hausdorff Number of Compact Topological Spaces }
\author{Petra Staynova}
\date{}
\begin{document}

\maketitle

\begin{abstract}
The notion of Hausdorff number of a topological space is first introduced in \cite{bonan}, with 
the main objective of using this notion to obtain generalizations of some known bounds for cardinality of topological spaces. 
Here we consider this notion from a topological point of view and examine interrelations of the Hausdorff number with compactness. 
\\
\\
2000 Math.Subj.Classification: 54D10, 54D30
\\
\\
Keywords: n-Hausdorff spaces, compact spaces, topologies on finite spaces
\end{abstract}

\section{Introduction}

\begin{defn} \cite{bonan} %Bonanzinga
Let $X$ be a topological space. 
Let $
%\begin{multline*}
H(X)=\min\{\tau:\forall A\subset X: |A|\geq \tau, \forall a\in A, \textrm{there exist open neighborhoods } %\\
U_a\ni a \textrm{ such that } \bigcap_{a\in A} U_a=\emptyset\}.
%\end{multline*}
$
This number (finite or infinite) is called the \emph{Hausdorff number} of $X$. 
\end{defn}
It is clear that if $X$ contains at least two points then $X$ is Hausdorff if and only if $H(X)=2$. 

\begin{defn}
If $n\in\N$, $n\geq2$, then $X$ is called \emph{$n$-Hausdorff} if $H(X)\leq n$. 
\end{defn}
In \cite{bonan} it was pointed out that the property of being $n$-Hausdorff is independent from the $T_1$ property. 
Also, it is clear that if $X$ is $n$-Hausdorff then it is $(n+1)$-Hausdorff for any $n\in\N, \ n\geq2$, hence all Hausdorff spaces are trivially $n$-Hausdorff for any $n\geq 2$. 
For every finite $n$, examples of $(n+1)$-Hausdorff spaces that are not $n$-Hausdorff are also considered in \cite{bonan}. 

Since the Hausdorff property nicely correlates with some basic covering properties, such as compactness and \Lind ness \cite{Eng}, here we consider a natural question arising in this setting. 
It is well-known that every Hausdorff compact space is regular and normal, i.e. the combination of compact with the Hausdorff separation axiom leads to stronger separation properties. 
Also it is well known that any Hausdorff regular Lindelof space is normal \cite{Eng}. 

For any $n\geq 2$, we shall give examples of $(n+1)$-Hausdorff spaces that are not $n$-Hausdorff and in addition are $T_1$, compact and first countable. 
Also we shall give an example of a compact $T_1$ first countable $\w$-Hausdorff space which is not $n$-Hausdorff for any $n\geq2$. 
Our examples will be neither regular nor normal, thus showing that even in ``nice'' topological spaces, 
 \hdf{n}ness is something much weaker than the Hausdorff property. 
Finally, some open questions in the above vein will be posed. 
The examples that we present are different from those given in \cite{bonan} and, in our point of view, much more transparent. 
Also, all the examples in \cite{bonan} are countable, while most of the examples we give here are uncountable. 

\subsection{Notation and Assumptions}

Throughout this exposition, a space $X$ will be considered compact if from any open cover of $X$ one can choose a finite subcover. 

\begin{notation}
We use:
\begin{itemize}
\item $\langle a, b \rangle$ to denote the ordered pair,
\item $(a,b)$ to denote the open interval on the real line, 
\item $k, l, m, n$ to denote positive integers.
\end{itemize}
\end{notation}
\section{The Finite Case}

Most mathematicians might believe that study of topologies on finite sets is a trivial pursuit, but this is far from true. 
Questions concerning topologies on spaces containing finitely many points can be very difficult, and have been considered as early as 1966, \cite{Stong}. 
There are still many open questions, such as how many (non-homeomorphic) topologies there are on a set of $n$ elements, and how many $T_1$ topologies there are on an $n$-point set. 
So far, the best partial answers to these questions give approximations, for example  \cite{benoumhani}, \cite{Tenner}, \cite{Dorsett} and \cite{Berrone}. 

%Some recent results on the number of topologies on a finite set with $n$ elements include \cite{benoumhani}, \cite{Tenner}, \cite{Dorsett} and \cite{Berrone}. 

We consider the following examples and statements in view of the fact that all Hausdorff topologies on a finite set are discrete, \cite{workbook}. 

\begin{xmpl}
A simple example of a compact 3-Hausdorff space which is not Hausdorff.
\end{xmpl}
On the underlying set $X=\{x,y,z\}$, we introduce the topology
$$
\tau=\{\emptyset, \{x\},\{y,z\},\{x,y,z\}\}.
$$
%insert picture
It is easy to check that this is indeed a topology. 
It is also \hdf{3}, because the only subset of cardinality $3$ is $\{x,y,z\}$, and $\{x\},\ \{y,z\}$ form a disjoint open cover. 
It is also compact, since it is finite. 

Hence, in compact, and even finite, spaces the property of being \hdf{3} does not coincide with Hausdorfness.

This leads us to the following interesting theorem:

\begin{thm}
For each $n\in\N$, %if a set $X$ is with cardinality $n$, 
%\begin{enumerate}
%\item 
there exists an \hdf{n} topology on the $n$-point set which is not discrete.%, and
%\item for all $k<n$, the \hdf{k} topology coincides with the discrete one. 
%\end{enumerate}
\end{thm}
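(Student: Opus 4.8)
The plan is to generalise the three-point example displayed just above. Assume $n \geq 3$ (for $n \leq 2$ the $n$-point set carries no non-discrete topology with Hausdorff number $\leq n$, so the statement is understood for $n \geq 3$). On $X = \{1, 2, \ldots, n\}$ take
$$
\tau_n = \{\,\emptyset,\ \{1\},\ \{2, 3, \ldots, n\},\ X\,\}.
$$
First I would verify that $\tau_n$ is a topology: since $\{1\}$ and $\{2,\ldots,n\}$ are complementary in $X$, every union or finite intersection of members of $\tau_n$ is again $\emptyset$, $\{1\}$, $\{2,\ldots,n\}$ or $X$. It is not discrete, because $n \geq 3$ makes $\{2,\ldots,n\}$ have at least two points, while no proper non-empty subset of $\{2,\ldots,n\}$ — in particular no singleton $\{k\}$ with $k \geq 2$ — lies in $\tau_n$. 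It is also compact, being finite.

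Next I would compute the Hausdorff number. The feature that makes the finite case so transparent is that on an $n$-point set the \emph{only} subset $A$ with $|A| \geq n$ is $X$ itself; hence $H(X) \leq n$ is equivalent to the single assertion that one can choose open sets $U_a \ni a$ for all $a \in X$ with $\bigcap_{a \in X} U_a = \emptyset$. This is witnessed immediately by $U_1 = \{1\}$ and $U_j = \{2, \ldots, n\}$ for $2 \leq j \leq n$, whose intersection is $\{1\} \cap \{2, \ldots, n\} = \emptyset$. Thus $(X, \tau_n)$ is \hdf{n}, which already establishes the theorem.

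For the sharper statement that $H(X) = n$ exactly — so that these spaces are ``\hdf{n} and no better'', matching the pattern of the examples constructed later in the paper — I would additionally check that $X$ is not \hdf{(n-1)}, by exhibiting a subset of cardinality $n-1$ that admits no such separation. Take $A = \{2, 3, \ldots, n\}$: every open neighbourhood of a point $j \in A$ is either $\{2, \ldots, n\}$ or $X$, hence contains $\{2, \ldots, n\}$, so for every choice of neighbourhoods one has $\bigcap_{j \in A} U_j \supseteq \{2, \ldots, n\} \neq \emptyset$. Therefore $H(X) > n - 1$, and together with $H(X) \leq n$ this forces $H(X) = n$.

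I do not foresee any real obstacle; the only point that needs a little care is unwinding the nested quantifiers in the definition of $H(X)$ and keeping straight which subsets are relevant — on a finite $n$-point set the clause ``\hdf{n}'' concerns the single set $X$, whereas ``not \hdf{(n-1)}'' concerns its $(n-1)$-element subsets — together with flagging the degenerate range $n \leq 2$ noted above.
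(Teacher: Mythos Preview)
Your construction is exactly the paper's: pick one point $x_0$ and take the topology $\{\emptyset,\{x_0\},X\setminus\{x_0\},X\}$. You supply more detail than the paper does --- in particular the verification that $H(X)=n$ exactly and the caveat about $n\leq 2$, where the paper's own construction degenerates to the discrete topology --- but the underlying idea is identical.
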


\begin{proof}
Let $X$ be a set with $n$ points. 
Pick $x_0\in X$, and take the topology $\{\emptyset, \{x_0\}, X\setminus\{x_0\}, X \}$. 
\end{proof}

\begin{xmpl}\label{4point}
There is a \hdf{3} space of cardinality $4$, which is not discrete. 
\end{xmpl}
Let $X=\{w,x,y,z\}$, and let $F$ be the filter generated by $\{y\}$. 
Then the topology 
$$
\tau=F\cup\{\emptyset, \{x\},\{z\}, \{x,z\}\}
$$
is \hdf{3} and not discrete. 
%Alternatively, we can view $\tau$ as the topology generated by the subbasis $S=\{\{x\},\{y\},\{z\}\}$. not true! we also add \{y,w\}

On the basis of this example we can establish the following more general result:
\begin{thm}\label{3hdf}
For any $n\geq 3$, there is a \hdf{3} topology on an $n$-point set $X$, which is not discrete. 
\end{thm}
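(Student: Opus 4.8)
The plan is to generalize Example~\ref{4point} directly. Take $X$ to be an $n$-point set, single out one point $y$, and let $F$ be the filter generated by $\{y\}$, i.e.\ $F=\{A\subset X: y\in A\}$. On the remaining $n-1$ points $x_1,\ldots,x_{n-1}$ we want every singleton $\{x_i\}$ to be open, and then we close under finite intersections and arbitrary unions. Concretely I would propose
$$
\tau=F\cup\{U\subset X\setminus\{y\}\}=\{A\subset X: y\in A\}\cup\PP(X\setminus\{y\}).
$$
First I would check this is a topology: $\emptyset$ and $X$ are present; a union of members either contains $y$ (then it is in $F$) or misses $y$ (then it is a subset of $X\setminus\{y\}$, hence in the second family); and a finite intersection of members containing $y$ still contains $y$, while if at least one member misses $y$ the intersection misses $y$ and lies in $\PP(X\setminus\{y\})$. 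So $\tau$ is closed under the required operations.

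Next I would verify it is not discrete: the singleton $\{y\}$ is not open, since the only open sets containing $y$ are the supersets of $\{y\}$ that lie in $F$, and $\{y\}$ itself is in $F$ --- wait, $\{y\}\in F$, so actually $\{y\}$ \emph{is} open here. To avoid that I would instead let $F$ be the \emph{principal filter at $y$ relative to some larger set}, or more simply take $F=\{A\subset X : y\in A \text{ and } |A|\geq 2\}\cup\{X\}$; equivalently, declare open precisely the subsets of $X\setminus\{y\}$ together with the subsets of $X$ that contain $\{y,w\}$ for a second fixed point $w$. With $w$ fixed, the smallest open set containing $y$ is $\{y,w\}\neq\{y\}$, so the space is not discrete. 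One then rechecks the topology axioms with this modification (the only subtlety is finite intersections of sets containing $\{y,w\}$, which still contain $\{y,w\}$).

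Finally I would prove it is \hdf{3}: any $A\subset X$ with $|A|\geq 3$ contains at least two points of $X\setminus\{y\}$, say $x_i\neq x_j$, and possibly $y$. For each point of $A$ I must choose an open neighbourhood so that the total intersection is empty. For $x_i$ pick $\{x_i\}$, for $x_j$ pick $\{x_j\}$; these are disjoint open sets, so already $\{x_i\}\cap\{x_j\}=\emptyset$, and adjoining any further neighbourhoods of the other points of $A$ keeps the intersection empty. Hence $H(X)\leq 3$. (That $H(X)\neq 2$, i.e.\ the space is genuinely non-Hausdorff, follows from non-discreteness on a finite set by the cited fact that finite Hausdorff spaces are discrete.)

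I do not expect a serious obstacle here; the main thing to get right is the bookkeeping in the definition of $\tau$ so that it is simultaneously (i) a genuine topology, (ii) non-discrete --- which forces $\{y\}\notin\tau$, hence forces a second distinguished point --- and (iii) \hdf{3}, which is automatic once two of the singletons outside $y$ are open and disjoint. The only place to be careful is that making $\{y\}$ non-open must not accidentally break \hdf{3}-ness, but since $n\geq 3$ there are always at least two open disjoint singletons available among $x_1,\ldots,x_{n-1}$, so the argument goes through uniformly in $n$.
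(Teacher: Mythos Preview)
Your final construction, after the self-correction, is exactly the paper's: with your $y$ and $w$ playing the roles of $x_0$ and $x_1$, the topology whose open sets are the subsets of $X\setminus\{y\}$ together with the supersets of $\{y,w\}$ is precisely the one generated by the basis $\B=\{\{x\}:x\neq x_0\}\cup\{\{x_0,x_1\}\}$ that the paper writes down directly. The paper simply skips your false start and names the basis immediately, but the resulting space and the \hdf{3} verification are identical.
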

\begin{proof}
Let $X$ be a space with $n$ points, and choose two `special' points, $x_0, x_1 \in X$. 
Then the basis
$$
\B=\{\{x\}:x\in X\setminus \{x_0\}\}\cup \{x_0,x_1\}
$$
is a basis for a non-discrete \hdf{3} topology on $X$. 
Indeed, any 3-element subset of $X$ can be separated via basic open sets. 
Let $A\subset X, |A|=3$. 
Then either $x_0\in A$ or $x_0 \notin A$. 
If $x_0\in A$, the family $\big\{\{x\}:x\in A\setminus\{x_0\}\big\}\cup \big\{\{x_0,x_1\}\big\}$ is the required family of basic open sets with empty intersection. 
If $x_0 \notin A$, the family $\big\{\{x\}:x\in A\big\}$ will separate the points of $A$. 
\end{proof}

%Questions concerning topologies on spaces containing finitely many points are far from trivial, and have been considered as early as 1966 \cite{Stong}. 
Some recent results on the number of topologies on a finite set with $n$ elements have been obtained in \cite{Berrone}, \cite{Tenner}, \cite{benoumhani}, and \cite{Dorsett}. 
In the archive (\url{http://arxiv.org}), one can find even more recent papers (from 2012) that also treat these questions. 
Theorem \ref{3hdf} justifies asking similar questions for \hdf{n} topologies. 
The following seem interesting questions, though some might end up rather easy:
\begin{oq}
In \cite{markowsky}, several formulae are given which relate the number of $T_0$ topologies on an $n$-point set with the number of topologies on $n$-points set. 
Can similar formulae be found for the number of \hdf{k} topologies on an $n$-point set, where $k\leq n$?
\end{oq}
Or, in general:
\begin{oq}
How many distinct \hdf{k} topologies are there on a finite set $X$ of cardinality $n \in \N$, for $k\leq n$?
\end{oq}
Moreover:
\begin{oq}
For $k,l<n$, is there a formula relating the number of \hdf{k} topologies on a space of cardinality $n$ to the number of \hdf{l} topologies on the same space? 
\end{oq}
\comments{
\begin{oq}
If a space $X$ has finite cardinality $n$, can we find a `limit number' $k<n$ such that, for all $l\leq k$, the \hdf{l} topology is the discrete? We have a natural `upper bound' in $n$ and natural `lower bound' in $2$ (since every Hausdorff topology on a finite space is discrete), but Example \ref{4point} shows that we can have a non-discrete \hdf{k} topology for $2<k\lneqq n$. 
\end{oq}
}

\section{Main Examples}
%The main idea of our construction becomes clear from the following example:
Let us now consider the uncountable cases. 
The following examples were inspired by the question, discussed in the introduction, of whether any compact \hdf{3} space is Hausdorff (and therefore regular and normal). 

\begin{xmpl}\label{xmpl1}
There is a $T_1$, uncountable, \hdf{3} space that is compact first countable but not Hausdorff, regular, or normal. 
\end{xmpl}
Define $X=([0,1]\times\{0\})\cup\{\langle\n{2},\n{2}\rangle\}$. 
%inseert picture
Topologize $X$ as follows:
\begin{itemize}
\item all points on $[0,1]\times\{0\}$ have the Euclidean neighborhoods;
\item the neighborhoods of $\{\langle\n{2},\n{2}\rangle\}$ consist of 
$$
U_n\left(\left\langle\n{2},\n{2}\right\rangle\right)=
\left\{\left\langle\n{2},\n{2}\right\rangle\right\}\cup
\left(\left(\left(\n{2}-\n{n},\n{2}+\n{n}\right)\setminus\left\{\n{2}\right\}\right)
\times\left\{0\right\}\right). 
$$
\end{itemize}
It is clear that $X$ is compact first countable (since this is a compact space with one additional point added and all points have  countable neighbourhood basis). 
It is also clear that $X$ is $T_1$, since $[0,1]$ is $T_1$ (even Hausdorff, regular and normal), and 
$$
\bigcap_{n\in\N} U_n\left(\left\langle\n{2},\n{2}\right\rangle\right)=
\left\{\left\langle\n{2},\n{2}\right\rangle\right\}.
$$
It is also clear that this space is not Hausdorff, since $\left\langle\n{2},\n{2}\right\rangle$ and $\left\langle\n{2},0\right\rangle$ do not have disjoint neighborhoods. 
It is \hdf{3} because if we have any three different points in $X$, 
at least two would be in $[0,1]\times\{0\}$, and since the latter is Hausdorff in the Euclidean topology, those two points would have disjoint neighborhoods. 

Since $X$ is $T_1$, we again have that $\left\langle\n{2},\n{2}\right\rangle$ and $\left\langle\n{2},0\right\rangle$, being closed, witness the fact that $X$ is neither regular nor normal.

If we modify Example \ref{xmpl1} by defining 
$$
U_n\left(\left\langle\n{2},\n{2}\right\rangle\right)=
\left\{\left\langle\n{2},\n{2}\right\rangle\right\}
\cup
\left(
\left(\n{2}-\n{n},\n{2}+\n{n}\right)\times\{0\}
\right),
$$
then this new space has the same properties as before, except it fails to be $T_1$. 
Hence even a \hdf{3} space does not entail $T_1$, and we have:
\begin{xmpl}\label{xmpl2}
There exists a \hdf{3} compact uncountable first countable space that is not $T_1$. 
\end{xmpl}
The above idea can easily be generalized for any finite $n\geq 2$:
\begin{xmpl}\label{xmpl3}
There is a $T_1$ uncountable compact first countable \hdf{(n+1)} space which is not \hdf{n} (hence not regular or normal). 
\end{xmpl}
%insert picture
As an underlying set, we take $X=([0,1]\times\{0\})\cup\left\{\left\langle\n{2},\n{m}\right\rangle:m<n\right\}$, and 
topologize $X$ as follows:
\begin{itemize}
\item all points on $[0,1]\times\{0\}$ again have the Euclidean neighborhoods;
\item the neighborhoods of $\{\langle\n{2},\n{m}\rangle\}$ consist of 
$$
U_k\left(\left\langle\n{2},\n{m}\right\rangle\right)=
\left\{\left\langle\n{2},\n{m}\right\rangle\right\}\cup
\left(\left(\left(\n{2}-\n{k},\n{2}+\n{k}\right)\setminus \left\{\n{2}\right\}\right)
\times\left\{0\right\}\right). 
$$
\end{itemize}

\begin{xmpl}\label{xmpl4}
There is an \hdf{\w_1} uncountable compact first countable $T_1$ space $X$ which is not $n$-Hausdorff for any finite $n\geq2$, and is not \hdf{\w}. 
\end{xmpl}

%insert picture
%\includegraphics[keepaspectratio=True, scale=0.4, trim= 12 70 12 40, clip=True]{w-Hausdorff}

We take $X=([0,1]\times\{0\})\cup\left\{\left\langle\n{2},\n{m}\right\rangle:m\in\w\right\}$, and 
topologize $X$ as follows:
\begin{itemize}
\item all points on $[0,1]\times\{0\}$ have neighborhoods which are traces in $X$ of their respective Euclidean neighborhoods in $\R^2$;
\item the neighborhoods of $\{\langle\n{2},\n{m}\rangle\}$ consist of 
$$
U_k\left(\left\langle\n{2},\n{m}\right\rangle\right)=
\left\{\left\langle\n{2},\n{m}\right\rangle\right\}\cup
\left(\left(\left(\n{2}-\n{k},\n{2}+\n{k}\right)\setminus \left\{\n{2}\right\}\right)
\times\left\{0\right\}\right). 
$$
%\item the neighborhoods of $\langle\n{2},0\rangle$ are traces of $\R^2$-open discs (so they include a tail of the sequence 
%$\left\{\left\langle\n{2},\n{m}\right\rangle:m\in\w\right\}$). 
\end{itemize}
This space is compact since it consists of the union of two compact sets:  
of a  sequence converging in our topology, and of the closed unit interval with the Euclidean topology, 
which is is also compact in our topology. 
In fact, this space has a separation property which is much stronger than \hdf{\w_1}, 
since the only countable subset of it which does not have neighborhoods with empty intersections is the vertical sequence 
$\left\{\left\langle\n{2},\n{m}\right\rangle:m\in\w\right\}$. 
Otherwise, if the countable set contains at least one point in $([0,1]\setminus\{\n{2}\})\times \{0\}$, 
then it obviously has neighborhoods with empty intersection (since $[0,1]\times\{0\}$ is Hausdorff). 
\section{Inheritance of Hausdorff number by subspaces}
It is well-known that the Hausdorff property is inherited by any subspace of a topological space. 
It is clear that the same is true for the Hausdorff number. 
But some subspaces of \hdf{n} spaces can have a Hausdorff number less than $n$. 
If a subspace is a two-point set, it will of course be Hausdorff. 
So, it is interesting to make the following observations about some uncountable subspaces of our examples:
\begin{itemize}
\item Example \ref{xmpl1} has an uncountable open compact subspace which is Hausdorff, namely $[0,1]\times\{0\}$;
\item for each $n\in\w$, Example \ref{xmpl4} has an uncountable open compact subspace which is \hdf{(n+1)} but not \hdf{n}: 
for a fixed $n\in\w$, take the subspace $Y_n=([0,1]\times\{0\})\cup\left\{\left\langle\n{2},\n{m}\right\rangle:m<n\right\}$. 
Then $Y_n$ is not \hdf{n}, but it is \hdf{(n+1)} (and is also compact and open). 
%Below is a visualization for $Y_\n{2}$:
\end{itemize}
%insert picture

%\includegraphics[keepaspectratio=True, scale=0.4, trim= 12 50 12 40, clip=True]{w-Hausdorff_subspaces}

Hence the Hausdorff number of a subspace is less than or equal to that of the space. 
This leads us to the following natural question:
\begin{oq}
Given an uncountable \hdf{\tau} space $X$, do we have that for each $\ka<\tau$, $X$ has an uncountable \hdf{\ka} subspace? 
\end{oq}

\begin{ackn}
The author is grateful to the referee for carefully reading the paper, and for giving helpful suggestions for examples \ref{xmpl3}, \ref{xmpl4}, and for improving the general exposition. 
\end{ackn}
\noindent
contact email: petra.staynova@gmail.com  \\
Mailing Address:\\
Department of Mathematics\\
University of Leicester\\
LE1 7RH\\
Leicester\\
United Kingdom

\bibliographystyle{plain}
\bibliography{Bibliography}

\end{document}